\documentclass{amsart}
\usepackage{amssymb}

\newtheorem*{unnumberedtheorem}{Theorem}

\newtheorem*{proposition}{Proposition}
\newtheorem{lemma}{Lemma}
\theoremstyle{definition}

\DeclareMathOperator{\init}{in}
\DeclareMathOperator{\inv}{inv}
\DeclareMathOperator{\ord}{ord}

\DeclareMathOperator{\coeff}{coeff}

\DeclareMathOperator{\Spec}{Spec}

\newcommand{\N}{\mathbb{N}}
\newcommand{\eps}{\varepsilon}
\newcommand{\w}{w}
\newcommand{\wt}{\widetilde}
\newcommand{\ol}{\overline}
\newcommand{\x}{x}
\newcommand{\y}{y}
\newcommand{\Jo}{K}
\newcommand{\rs}{\sum_{j\notin\A}s_j}
\newcommand{\tJo}{K_1}
\newcommand{\tJop}{K_1'}
\newcommand{\oc}{\frac{o}{c!}}

\newcommand{\kk}{\mathbb{K}}
\newcommand{\cic}{\frac{c-i}{c!}}
\newcommand{\qc}{\frac{\q}{c!}}

\newcommand{\z}{u}
\newcommand{\zp}{u'}
\newcommand{\zpp}{\wt u}
\newcommand{\coeffJ}{\coeff_{V}(J)}

\newcommand{\q}{{p^e}}
\newcommand{\A}{T}
\newcommand{\g}{q}
\newcommand{\gp}{g}
\newcommand{\m}{w}
\newcommand{\G}{H}
\newcommand{\dd}{\ord I}
\newcommand{\ddp}{\ord I_1'}
\newcommand{\V}{U}
\newcommand{\Vp}{U'}
\newcommand{\Vpp}{\wt U}
\newcommand{\Kpp}{\wt K}
\newcommand{\dds}{\Big(\dd+\rs\Big)}
\newcommand{\To}{T\setminus\{1\}}
\newcommand{\mm}{m_R}
\newcommand{\mmp}{m_{R'}}

\newcommand{\wF}{\wt F}
\newcommand{\OO}{\mathcal{O}}
\def\u{v}

\font\bf=ptmb at 10pt

\font\tss=ptmr at 8pt

\def\ltextindent#1{\hbox to \hangindent{#1\hss}\ignorespaces}
\long\def\ignore#1\recognize{}
\def\no{\noindent}
\def\big{\bigskip}
\def\med{\medskip}
\def\hs{\hskip}

\def\gb{\goodbreak}
\def\cl{\centerline}
\def\ds{\displaystyle}

\def\ol{\overline}

\def\wt{\widetilde}

\def\sm{\setminus}

\def\{{\lbrace}
\def\}{\rbrace}

\def\isom{\cong}
\def\map{\rightarrow}

\def\inv{^{-1}}
\def\6{\partial}
\def\Spec{{\rm Spec}}
\def\Proj{{\rm Proj}}

\def\m{m}

\def\K{{\Bbb K}}
\def\KK{{K}}

\def\<{<\!}
\def\>{\!>}

\def\inin{{\rm in}}

\def\TT{{} i\!i_\TT}

\def\tt{t\!t}
\def\TT{T}

\def\1{\bf 1}

\def\inv{{\rm inv}}
\def\lex{{\rm lex}}

\def\rrho{,\rho}
\def\rrho{}


\newcommand{\hh}{}
\newcommand{\QQ}{Q}
\newcommand{\congruent}{\equiv}


\newcommand{\st}{}


\begin{document}


\title[Characterizing the Increase of the Residual Order]{Characterizing the Increase of the Residual Order under Blowup in Positive Characteristic
}

\author{Herwig Hauser, Stefan Perlega}
\maketitle


\let\thefootnote\relax\footnote{
{\tss \hs -.41cm
Communicated by S.~Mukai. Received April 21, 2017. Revised May 7, 2018; May 17, 2018.\par\no
MSC-2000: 14B05, 14E15, 12D10. \par\no
The authors are grateful to an anonymous referee for a very careful reading and many valuable suggestions. Supported by projects P-25652 and P-31338 of the Austrian Science Fund FWF. \med

\no Faculty of Mathematics, University of Vienna, Austria.\par
\no herwig.hauser@univie.ac.at, stefan.perlega@univie.ac.at
}}


\begin{abstract} In contrast to the characteristic zero situation, the residual order of an ideal may increase in positive characteristic under permissible blowups at points of the exceptional divisor where the order of the ideal has remained constant. The specific situations where this happens are described explicitly.\end{abstract}



\section{Introduction}

\no To prove embedded resolution of singularities in characteristic zero for a reduced subscheme $X$ of a regular ambient scheme $W$ equipped with a normal crossings divisor $D$ one typically associates to every point $a$ of $X$ a local invariant $\inv_aX$ measuring the complexity of the singularity of $X$ at $a$ and the position of $X$ with respect to $D$. The invariant consists of a string of non-negative integers, is upper semicontinuous and decreases lexicographically when $X$ is blown up along the center $Z$ defined as the locus of points where $\inv_aX$ attains its maximal value. This is done in a way so that $Z$ is regular and has normal crossings with $D$. As the invariant varies in the well ordered set $(\N^N,\lex)$ and its minimal value corresponds to a regular point $a$ at which $X$ has normal crossings with $D$, the resolution of $X$ is obtained by induction \cite{Hironaka_Annals, Villamayor_Constructiveness, Villamayor_Patching, BM_Canonical_Desing, EH, Cutkosky_Book, Wlodarczyk, Kollar_Book}.

For the first component of $\inv_aX$ the simplest choice is the order $\ord_aJ$ of the defining ideal $J$ of $X$ in $W$. Blowing up a regular center contained in the associated equimultiple locus of $J$, the order does not increase, $\ord_{a'}J'\leq\ord_aJ$, for all points $a'$ in the weak transform $X'$ of $X$ above $a$. At a point $a'$ where the order remains constant, the second component of $\inv_aX$ comes into play. Leaving aside transversality issues of $Z$ with $D$, it is (usually) defined as the order of the coefficient ideal $K$ of $J$ at $a$ with respect to a hypersurface of maximal contact $V$, less the exceptional multiplicity of $K$. This numeral \hh
does not depend on the choice of the hypersurface and is again upper semicontinuous along the strata defined by the order of $J$. It is thus well suited to form the second component of $\inv_aX$. Blowing up a regular center $Z$ inside the top loci of the order and of the \hh 
order of the coefficient ideal, the second component does not increase whenever the first remains constant. From this point on, the argument is repeated until, by exhaustion of dimensions, a decrease of the invariant under blowup is established.

This approach to resolution has several drawbacks in positive characteristic: First, hypersurfaces of maximal contact no longer exist; a possible substitute are hypersurfaces of {\it weak} maximal contact as introduced in \cite{EH, Ha_BAMS_1, Ha_BAMS_2}. These are defined as regular hypersurfaces maximizing the order of the coefficient ideal (in characteristic zero, the maximum can be realized by a hypersurface of maximal contact.) \hh This maximum will be called the \emph{residual order} of $J$ at $a$ (the name residual order was introduced by Hironaka for the situation in positive characteristic in \cite{Hironaka_CMI}). Secondly, the \hh residual order is no longer upper semicontinuous, so its top locus need not be closed; extra care has to be taken. Finally, even if centers are chosen appropriately, the residual order may still go up under blowup at points where the order of the ideal $J$ has remained constant. This increase is also known as the ``kangaroo phenomenon''. It destroys the induction argument.

In view of these difficulties, two approaches are plausible: Either to reject the residual order as a valuable resolution invariant in positive characteristic and to search for new invariants. This option has been undertaken with a certain success by several authors \cite{Hironaka_Bowdoin, Cossart_Thesis, Cossart_Contact_Maximal, Villamayor_Hypersurface, Kawanoue_IF_1, Kawanoue_Matsuki}. Or, to try to understand better the circumstances where the residual order behaves badly in order to develop an exit strategy for the obstructions. This is the proposal we wish to pursue in the present paper.

In this spirit, the situations where an increase of the residual order occurs under blowup with permissible choices of centers will be investigated in detail. It turns out that in order to produce an increase, the defining equations of $X$ in $W$ must satisfy quite restrictive conditions: The (weighted) initial forms of minimal order of the elements of $J$ have a unique form (up to constant factors and coordinate changes), and are actually powers of purely inseparable polynomials. Their logarithmic Hasse derivatives have a specific shape, and the exceptional multiplicities of the coefficient ideal $K$ of $J$ satisfy an explicit arithmetic inequality. These three conditions are satisfied simultaneously only in very special cases.

Along the proof of these facts, we extend Moh's bound on the possible increase of the residual order to non-hypersurfaces and not necessarily purely inseparable power series \cite{Moh}. The upshot of the results is as follows (see section 3 for the precise statements):\med


\begin{unnumberedtheorem} Let $J$ be an ideal of $W$ of order $c$ at $a$, with coefficient ideal $K$ of order $o$ with respect to a hypersurface of weak maximal contact. Assume that the residual order of $J$ with respect to a given normal crossings divisor $D$ increases under permissible blowup at a point $a'$ where $c$ has remained constant. Then $c$ is a multiple $m\cdot p^e$ of a power of the characteristic $p$, $o$ is a multiple $w\cdot c!$ of $c!$, the weighted initial form with respect to $w$ of elements $f$ of $J$ of minimal weighted order  is a power $\inin_w(f)=(z^c+F(x))^m$, with $F$ a homogeneous polynomial of degree $w\cdot p^e$ in variables $x_1,\ldots,x_n$, not a $p^e$-th power, and with a specific \hh shape  $x_k^{p^\ell}\6_{x_k^{p^\ell}}F$ of the logarithmic Hasse derivative. Here, $\ell<e$ is maximal so that $F$ is a $p^\ell$-th power. 

Choosing $x_i$ subordinate to $D$ and factorizing $F$ maximally into $F=x^r\cdot G$, the residues modulo $p^{\ell+1}$ of the exceptional exponents $r_i$ satisfy $\sum \ol {r_i} \leq (b-1)\cdot p^{\ell+1}$, where the sum ranges over those exceptional components which are lost when passing from $a$ to $a'$, and where $b$ is the number of \hh $r_i\not\congruent 0$ modulo $p^{\ell+1}$ among them.

 In the above situation, the residual order of $J$ increases at most by $c!\over p$.
\end{unnumberedtheorem}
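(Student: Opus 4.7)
The plan is to exploit the explicit structural information established in the first two parts of the theorem: in suitable coordinates, every $f\in J$ of minimal weighted order has weighted initial form $\inin_w(f)=(z^c+F(x))^m$ with $F=x^r\cdot G$ homogeneous of degree $w\cdot p^e$, while the residues of the exceptional exponents satisfy $\sum\ol{r_i}\leq(b-1)p^{\ell+1}$. The bound $c!/p$ on the increase of the residual order should then fall out of a direct transformation calculation of this initial data under the permissible point blowup, in the spirit of Moh's original hypersurface estimate.

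First I would fix the affine chart of the blowup containing $a'$. Since the order $c$ is preserved at $a'$, the point $a'$ lies in a chart in which a single coordinate, say $x_k$, becomes the exceptional variable and the remaining variables are translated so as to vanish at $a'$. Substituting this chart transformation into the model $z^c+F(x)$ and using the homogeneity of $F$ together with the factorization $F=x^r G$, one obtains an explicit formula for the weighted initial form of the weak transform at $a'$, hence for the transformed coefficient ideal $K'$ with respect to the new hypersurface of weak maximal contact. The key structural feature, inherited from the $m$-th power factorization, is that the analysis reduces to the purely inseparable polynomial $z^c+F$; the factor $m$ plays only a book-keeping role.

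Next I would compare $\ord_{a'}K'$ with $\ord_aK=o=w\cdot c!$. The new hypersurface of weak maximal contact is obtained from $z$ by a translation $z\mapsto z+\varphi(x)$ needed to annihilate all removable terms of the transform; here the hypothesis on the logarithmic Hasse derivative $x_k^{p^\ell}\partial_{x_k^{p^\ell}}F$ from the first part enters crucially, as it controls the degree and shape of $\varphi$. The resulting weighted order of the transformed coefficient ideal is read off from the exponents $\ol{r_i}$ of the lost exceptional components; plugging the arithmetic inequality $\sum\ol{r_i}\leq(b-1)p^{\ell+1}$ into this formula should yield, after the normalization by $c!$, the bound $\ord_{a'}K'\leq o+c!/p$.

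The main obstacle will be this last passage from the combinatorial inequality on the $\ol{r_i}$ to the quantitative bound on the jump. One has to verify that the worst admissible configuration of exceptional exponents corresponds exactly to an increase of $c!/p$ and cannot be amplified by an alternative choice of the post-blowup translation $\varphi$; equivalently, one must check that no further decomposition of $F$ into a $p^{\ell+1}$-th power is compatible with the assumption that $\ell$ is maximal with $F$ a $p^\ell$-th power. This uses the Hasse-derivative structure of the first part to pin down the unique extremal direction in the Newton polyhedron of the transform, and shows that the residual movement of this polyhedron is by at most one lattice step in units of $1/p$, which translates into the claimed bound $c!/p$.
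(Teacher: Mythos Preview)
Your plan has the logical dependencies inverted. You propose to feed the residue inequality $\sum_{i\in T}\ol{r_i}\leq(b-1)p^{\ell+1}$ into a transformation formula and read off the bound $c!/p$. In the paper, however, the residue inequality (assertion~(6)) and the Moh bound (assertion~(9)) are \emph{parallel} consequences of the same underlying estimate; neither is derived from the other. The inequality~(6) constrains which monomial factors $x^r$ can occur at a kangaroo point, but it does not by itself quantify the jump of the residual order, and I do not see how your proposed substitution would produce $c!/p$ rather than some cruder bound depending on $b$.

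The paper obtains $c!/p$ by a Hasse-derivative argument that bypasses~(6) entirely. The Proposition (proved by a coordinate-change analysis inside $R$, not by computing transforms in $R'$ as you suggest) gives
\[
\ord I_1'\ \leq\ \frac{c!}{p^e}\,\ord_{Q_T}(F+H^{p^e})\ -\ \sum_{i\notin T} s_i
\]
for a suitable $H$. Now the key device: for any $k<e$ the Hasse derivative $\partial_{x_i^{p^k}}$ annihilates $p^e$-th powers, so
\[
\ord_{Q_T}(F+H^{p^e})\ \leq\ \ord_{Q_T}\partial_{x_i^{p^k}}F\ +\ p^k.
\]
Taking $k=\ell$ (maximal with $F$ a $p^\ell$-th power) there is some $i\in T$ with $\partial_{x_i^{p^\ell}}F\neq 0$; a degree count on the factorization $F=x^rG$ then gives $\ord_{Q_T}\partial_{x_i^{p^\ell}}F\leq v=\deg G$, and one concludes $\ord I_1'\leq \ord I+\frac{c!}{p^e}p^\ell\leq \ord I+c!/p$. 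This is the missing idea in your outline.

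You also treat the specific shape of $x_k^{p^\ell}\partial_{x_k^{p^\ell}}F$ (assertion~(8)) as an input hypothesis that ``controls the degree and shape of $\varphi$''. In the paper it is the reverse: (8) is established \emph{after}~(9), by observing that if $\ord_{Q_T}H_i<\deg H_i$ then the same chain of inequalities would force $\ord I_1'\leq\ord I$, contradicting the assumed increase. So the Hasse-derivative shape is a conclusion, not a tool you may invoke to locate the new hypersurface of weak maximal contact.
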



Various other notable approaches to the resolution problem in positive characteristic can be found in \cite{Abhyankar_56, Giraud, Hironaka_Bowdoin, Cossart_Thesis, Cossart_Contact_Maximal, Moh_Newton_Polygon, Cutkosky_Book, Cutkosky_Skeleton, Ha_Power_Series, HW, Villamayor_Hypersurface, BV_Monoidal, Kawanoue_IF_1, Kawanoue_Matsuki}. \med


{\it Acknowledgments.} We are indebted to the following mathematicians for many valuable discussions and suggestions: S.~Abhyankar, H.~Hironaka, G.~M\"uller, J.~Schicho, A.~Quir\'os,  S.~Encinas, O.~Villamayor, A.~Bravo, D.~Cutkosky, J.-J.~Risler, V.~Cossart, J.~W\l odarczyk, H.~Kawanoue, K.~Matsuki, D.~Panazzolo, M.~Spivakovsky, F.~Cano, R.~Blanco, D.~Zeillinger, D.~Wagner, A.~Fr\"uhbis-Kr\"uger.


\section{Setting}

\no The concepts and constructions that are successfully used to prove the embedded resolution of singularities over fields of characteristic zero require some amendments for their characteristic free definition. It remains an open problem whether these will suffice to give a proof of resolution in positive characteristic and arbitrary dimension. 

We shall work with complete regular local rings $R =(R,m_R)$ of dimension $n+1$ over an algebraically closed field $\K$, and of residue field $R/m_R\isom\K$. By Cohen's structure theorem, $R$ is a formal power series ring in $n+1$ variables over $\K$. It should be thought of as the completion of the local ring of some regular noetherian scheme $W$ over $\K$ at a closed point $a$, and ideals $J$ of $R$ as defining the formal neighborhood at $a$ of a closed subscheme $X$ of $W$. Typically, a regular system of parameters $(z,x)=(z,x_1,\ldots,x_n)$ will be chosen, with a distinguished parameter $z$. We then often fix a ring inclusion $\rho: R/(z)\, \isom\, Q\subset R$ providing a section of the projection $R\map R/(z)$, for some subring $Q$ of $R$. The induced isomorphism $R\isom Q[[z]]$ will be used frequently. 

The {\it order} of an ideal $J$ in $R$ is defined as $\ord\, J=\ord_{m_R}J=\sup\{k\in\N,\, J\subset m_R^k\}$. If $P$ is a prime ideal of $R$, we define the order $\ord_PJ$ of $J$ with respect to $P$ as the order of $J\cdot R_P$ in the localization $R_P$. For regular ideals $P$, it equals $\sup\{k\in\N,\, J\subset P^k\}$. A closed subscheme $Z=V(P)$ of $\Spec(R)$ is said to be {\it contained in the equimultiple locus of $J$} if $\ord_PJ=\ord\, J$ holds.  


The {\it initial form} $\inin(f)$ of an element $f\in R$ is the homogeneous polynomial of lowest degree of the power series expansion of $f$ with respect to the $m_R$-adic filtration of $R$. If $z,x_1,\ldots,x_n$ are given regular parameters in $R$ and \hh $w\in \mathbb Q$ is a rational number $\geq 1$, we define for $f\in R$ with expansion $f(z,x)=\sum_{i\geq 0} f_i z^i$ and coefficients $f_i\in\K[[x_1,\ldots,x_n]]$ the {\it weighted order} $\ord_wf$ of $f$ with respect to the weight vector $(w,1,\ldots,1)$ as the minimum of the values $wi+\ord\, f_i$, the order of $f_i$ being taken in $\K[[x_1,\ldots,x_n]]$. It clearly only depends on the choice of $z$. The {\it weighted initial form} $\inin_w(f)$ of $f$ with respect to the weight vector $(w,1,\ldots,1)$ (and the parameters $(z,x)$) is then defined as the sum $\inin_w(f)=\sum\, \inin(f_i)z^i$, where the sum ranges over those $i$ for which the minimal value of $wi+\ord\, f_i$ is attained.


Let $\pi: R\map R'$ be a {\it completed local blowup} of $R$, with regular center $Z=V(P)$ in $W=\Spec(R)$, for some ideal $P$ of $R$ and a complete regular local ring $R'$. By this we understand that $R'$ is the completion of a local ring $\OO_{W',a'}$ where $W'=\Proj(\oplus_{i\geq0}P^i)$ is the blowup of $W$ in $Z$, $a'\in W'$ is a closed point and $\pi:R\map R'$ is the induced map of complete local rings. As $R$ and $Z$ are regular, $R'$ is again regular, and actually isomorphic to $R$. Occasionally we shall identify $R'$ with $R$.

The {\it weak transform} of an ideal $J$ of $R$ under $\pi$ is defined as the (unique) ideal $J'$ of $R'$ so that \med

\cl{$\pi(J)\cdot R' = x_1^{\ord_PJ} \cdot J'$,}\med

\no where $x_1\in R'$ defines the exceptional component $E$ of $\pi$ in $\Spec(R')$. It is well known that under blowups in regular centers contained in the equimultiple locus of $J$ the order of $J$ does not increase when passing to $J'$ \cite{Ha_Obergurgl_2014}.


Define the {\it coefficient ideal} $\KK=\coeff_{V\rrho}(J)$ of $J$ with respect to a regular hypersurface $V=V(z)$ in $\Spec(R)$ and a section $\rho:R/(z)\isom Q\subset R$ of $R\map R/(z)$ as the ideal of $Q$ defined by\med

\cl{$\coeff_{V\rrho}(J)=\sum_{i<c} (f_i,\, f\in J)^{c!\over c-i}$,}\med

\no where $c=\ord\, J$ and elements $f\in J$ are expanded as series $f=\sum_{i\geq 0} f_iz^i$ in $Q[[z]]$, with coefficients $f_i$ in $Q$. The coefficient ideal depends on $V$ and $\rho$, but not on the choice of the parameter $z$ defining $V$. By abuse of notation, we suppress the dependence of the coefficient ideal on the choice of the section $\rho$. This does no harm in our context since the order of the coefficient ideal (which is our main concern) only depends on $V$ and not on $\rho$. 


Let $V=V(z)$ be a regular hypersurface in $\Spec(R)$ and let $o$ be the order of the coefficient ideal $\coeff_V(J)$. Further, set $w=\frac{o}{c!}$ where $c$ is the order of the ideal $J$. Then the minimum of the weighted orders $\ord_w(f)$ of elements $f\in J$ with respect to the regular parameter $z$ and the weight vector $(w,1,\ldots,1)$ equals $c\cdot w$.


A regular hypersurface $V=V(z)$ in $\Spec(R)$ has {\it weak maximal contact with $J$} if the order of the coefficient ideal $\coeff_{V\rrho}(J)$ of $J$ with respect to $V$ is maximized over all choices of regular hypersurfaces in $\Spec(R)$ and if for any blowup with regular center $Z=V(P)$ contained in $V$ and in the equimultiple locus of $J$, the strict transform of $V$ in $\Proj(\oplus_{i\geq0}P^i)$ contains all points at which the order of the weak transform of $J$ has remained constant.

Two cases can occur: The supremum of the orders of $\coeff_{V\rrho}(J)$ over all $V$ may be infinite, in which case $J$ is of the form $J=(z^c)$ for some regular parameter $z\in R$, and has trivial coefficient ideal equal to $0$ with respect to $V=V(z)$. This case is irrelevant for our investigations and will be discarded. Or, the supremum of the orders is bounded, in which case the maximum exists and is realized by some $V$. Such a $V$ can then be chosen so that its strict transform contains all points where the order of the weak transform of $J$ has remained constant. If the characteristic is zero, then $V$ can even be chosen in a way so that it has maximal contact with $J$.


Let $D$ be a (not necessarily reduced) normal crossings divisor in $\Spec(R)$, and let $J\subset R$ be an ideal. A regular hypersurface $V=V(z)$ is {\it compatible} with $D$ and $J$ if it has normal crossings with $D$ and if there is a section $\rho:R/(z)\isom Q\subset R$ of $R\map R/(z)$ so that the coefficient ideal $\KK=\coeff_{V\rrho}(J)$ of $J$ with respect to $V$ and $\rho$ factors into $\KK=M\cdot I$, for some ideal $I$ of $Q$, where $M$ is the principal ideal of $Q$ defining $D\cap V$ in $V$. 


Let $J$ be an ideal and $D$ a normal crossings divisor for which there exists a regular hypersurface $V$ that has weak maximal contact with $J$ and is compatible with $D$. The {\it residual order} of such an ideal $J$ with respect to $D$ is defined as \med

\cl{$ $residual-order$_D(J)=\ord(\coeff_{V\rrho}(J))- \ord\, M=\ord\, I$,}\med

\no where $V=V(z)\subset\Spec(R)$ is a hypersurface of weak maximal contact with $J$ and compatible with $D$, and where $M$ is the ideal which defines $D\cap V$ in $V$ and appears in the factorization $\coeff_{V}(J)=M\cdot I$. Notice that the residual order is independent of the choice of $V$. This numeral is frequently used in the proof of resolution of singularities in characteristic zero. It is supposed to measure the ``distance'' of $\KK$ from being a principal monomial ideal supported by $D$.


A completed local blowup $\pi:R\map R'$ with center $Z=V(P)$ is said to be {\it permissible} with respect to $J$ and $D$ if the center $Z$ of $\pi$ is regular, has normal crossings with $D$ and if there exists a hypersurface $V$ of weak maximal contact with $J$, compatible with $D$, and such that $Z$ is contained in $V$ and in the equimultiple loci of $J$ and $I$; here $I$ is defined through $\coeff_V(J)=M\cdot I$ as before. 


The {\it transform} $D'$ of $D$ with respect to $J$ under a permissible completed local blowup $\pi:R\to R'$ is defined as the normal crossings divisor $D'=D^s + (\ord_P\KK -c!)\cdot E$ in $\Spec(R')$, where $D^s$ denotes the strict transform of $D$ and $E=\pi^{-1}(Z)$ is the new exceptional component (cf. \cite{EH}). Here, $c$ is the order of $J$ in $R$, and $\KK$ is the coefficient ideal of $J$ with respect to a hypersurface of weak maximal contact $V$ with $J$ and compatible with $D$. The definition of $D'$ is independent of the choice of the hypersurface $V$.

If $J$ and $D$ admit a regular hypersurface $V$ having weak maximal contact with $J$ and compatible with $D$, it can be shown that there exists, for every permissible completed local blowup $\pi:R\to R'$ under which the order of $J$ remains constant, a regular hypersurface $U'$ in $\Spec(R')$ which has weak maximal contact with the weak transform $J'$ of $J$ and is compatible with $D'$ (cf. the proof of the proposition below). If the characteristic is zero, then the hypersurface $V$ in $\Spec(R)$  can be chosen in such a way that its strict transform $V'$ in $\Spec(R')$ has these properties. This is no longer true over fields of positive characteristic.


Regular parameters $(z,x)$ in $R$ are called {\it subordinate} to a permissible blowup $\pi$, an ideal $J$, a normal crossings divisor $D$ and a hypersurface $V$ of weak maximal contact with $J$ and compatible with $D$, if $V=V(z)$, the components of $D$ are supported by the hypersurfaces $V(x_i)$ of $\Spec(R)$, and if the defining ideal $P$ of the center $Z$ is generated by $z$ and $x_i$, for $i$ varying in a subset $S$ of $\{1,\ldots,n\}$. Permuting the $x_i$ if necessary, we may assume that the blowup occurs in the $x_1$-chart. There then exist a subset $T$ of $S$ containing $1$ and constants $t_i\in \K^*$, for $i\in\TT\sm\{1\}$, so that $\pi$ is defined by
 \[\begin{array}{ll}
 z\map x_1z, &\\ 
 x_1\map x_1, &\\
 x_i\map x_1(x_i+t_i) & \text{for $i\in \TT\sm \{1\}$,}\\
 x_i\map x_1x_i & \text{for $i\in S\setminus T$,}\\
 x_i\map x_i & \text{for $i\notin S$,}
\end{array}\]
where $R'$ is identified with $R$ and $(z,x_1,\ldots,x_n)$ denotes \hh a regular system of parameters in $R$ and $R'$. We may further assume that either $T=\{1\}$ or that for all indices $i\in T$ the inclusion $V(x_i)\subset D$ holds.\med


If the characteristic of $\kk$ is zero, it is well-known that for all permissible completed local blowups $\pi:R\to R'$ under which the order of $J$ remains constant $\ord J'=\ord J$ when passing to its weak transform $J'$, the residual order does not increase, i.e., 
\[\textnormal{residual-order}_{D'}J'\leq\textnormal{residual-order}_DJ\]
holds. Over fields of positive characteristic, this is no longer true: the residual order may increase.

\gb

\section{Results}

\no The characterization of ideals and permissible blowups for which the residual order increases goes as follows.

\begin{unnumberedtheorem}

Let $R$ be a complete regular local noetherian ring $R$ of dimension $n+1$ over an algebraically closed field $\K$ of positive characteristic $p>0$. Let $D$ be a normal crossings divisor in $\Spec(R)$. Let be given an ideal $J$ in $R$ admitting a hypersurface of weak maximal contact and compatible with $D$. Let $\pi: R\map R'$ be a completed local blowup of $R$, permissible with respect to $J$ and $D$ with center $Z$ defined by the ideal $P$ of $R$. Denote by $J'$ the weak transform of $J$ in $R'$, and by $D'$ the transform of $D$ in $\Spec(R')$ with respect to $J$.
  
Let $V$ in $\Spec(R)$ be a hypersurface of weak maximal contact with $J$ and com\-patible with $D$ such that $Z$ is contained in $V$ and in the equimultiple loci of $J$ and $I$, where $I$ appears in the factorization $\coeff_V(J)=M\cdot I$, with $M$ the ideal in $V$ defining $D\cap V$. Choose regular parameters $(z,x)=(z,x_1,\ldots,x_n)$ of $R$ subordinate to $\pi$, $J$, $D$ and $V$, and let $T\subset S\subset\{1,...,n\}$ and $t_i\in\K^*$ be as above. 

Assume that $J'$ has the same order as $J$ but that its residual order with respect to $D'$ is {\rm larger} than the residual order of $J$ with respect to $D$. Then the following conditions must be satisfied. 

\begin{enumerate} 


\item The order $c$ of $J$ is a multiple $c=m\cdot p^e$ of a $p$-th power, with $m\geq 1$ not divisible by $p$ and $e\geq 1$.\med


\item The order $o$ of the coefficient ideal $\KK$ of $J$ with respect to $V$ is a multiple $o=\w\cdot c!$ of $c!$, with $\w\geq 2$.\med


\item There exists a homogeneous, non $p^e$-th power polynomial $F$ 
in $x_1,\ldots,x_n$ of degree $w\cdot p^e$ so that the weighted initial form $\inin_w(f)$ with respect to $(z,x)$ and the weights $(\w, 1,\ldots,1)$ of every element $f\in J$ of minimal weighted order $c\cdot w$ is the $m$-th power of a purely inseparable polynomial, say\med

\cl{$\inin_w(f)=\alpha\cdot (z^{p^e}+F(x))^m$,}\med

\no for some non-zero constant $\alpha\in \K^*$.\med




\item Factorize $F$ into $F(x)=x^r\cdot G(x)$ with $r_i=\ord_{(x_i)}F$, for $i\in \TT$, and $G$ a homogeneous polynomial of degree $\u=\deg F-\sum_{i\in \TT} r_i$.  If $Q_\TT$ denotes the ideal of $\K[[x_1,\ldots,x_n]]$ generated by  $x_i-t_ix_1$, for $i\in\TT\sm\{1\}$, and $x_i$, for $i\not\in \TT$, then\med

\hh \cl{$\ord^{{\rm mod}\, p^e}_{Q_\TT}F > \u$,}\med

\no where $\ord^{{\rm mod}\, p^e}_{Q_\TT}F$ denotes the maximum of the orders $\ord_{Q_\TT}(F + H^{p^e})$ over all polynomials $H$ in $x_1,\ldots,x_n$.\med

\end{enumerate}

\no The inequality \hh $\ord^{{\rm mod}\, p^e}_{Q_\TT}F> \u$ from (4) implies the following conditions (5) to (9). Let $\ell<e$ be the largest integer so that $F$ is a $p^\ell$-th power, and denote by $b$ the number of exponents $r_i$, for $i\in\TT$, not congruent to \hh $0$ modulo $p^{\ell+1}$.\med

\begin{enumerate} \setcounter{enumi}{4}

\item Denote by $\tt$ the vector in $\K^n$ of components $t_i$ for $i\in \TT\sm\{1\}$, and $0$ otherwise. The polynomial $G(x)$ of the factorization $F(x)=x^r\cdot G(x)$ has, up to $p^e$-th powers, a unique form, \med 

\cl{$G((1,x_2,\ldots,x_n)+\tt)= \lfloor \prod_{i\in \TT\setminus\{1\}}(x_i+t_i)^{-r_i}\cdot N^{p^e}(x_2,\ldots,x_n)\rfloor_{\u}$}\med

\no for some polynomial $N(x_2,\ldots,x_n)$. 
Here, the product $\prod_{i\in{\TT\setminus\{1\}}} (x_i+t_i)^{-r_i}$ is considered as a power series, and $\lfloor -\rfloor_{\u}$ denotes the $\u$-jet of a power series. 


\item The residues $0\leq \ol r_i<p^{\ell+1}$ of $r_i$ modulo $p^{\ell+1}$ satisfy the arithmetic inequality\med

\cl{$\ds\sum_{i\in\TT} \ol r_i\leq (b-1)\cdot p^{\ell+1}$.}\med

\no Equivalently, one has \med

\cl{$\ds \sum_{i\in \TT} \ol r_i+\ol v\not=b\cdot p^{\ell+1}$.}\med


\item  For $j\not\in\TT$, the variables $x_j$ appear only as $p^e$-th powers in $F(x)$, say \med

\cl{$F(x)\in \K[x_i^{p^\ell}, x_j^{p^e},\, i\in \TT,\, j\not\in\TT]$.}\med


\item For $i\in\TT$, the $p^\ell$-th logarithmic Hasse derivatives of $F(x)$ with respect to $x_i$ are of the form\med

\cl{$\ds x_i^{p^\ell}\cdot \6_{x_i^{p^\ell}}\, F(x) = x^r\cdot H_i(x)$,}\med

\no where $H_i$ is a polynomial in $(x_j-t_jx_1)^{p^\ell}$ and  $x_k^{p^e}$, for $j\in \TT\setminus\{1\}$ and $k\not\in\TT$.\med


\item The increase of the residual order is bounded by\med

\cl{{\rm residual-order}$_{D'}J'\leq{}${\rm residual-order}$_DJ+\frac{c!}{p}$.}\med

\end{enumerate}

\end{unnumberedtheorem}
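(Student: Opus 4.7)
The plan is to work entirely in the $x_1$-chart with the subordinate parameters $(z,x_1,\ldots,x_n)$ and reduce the statement to a calculation on the weighted initial forms of the elements of $J$. Since the order of $J$ is preserved under $\pi$, the weak transform $J'$ is obtained by dividing $\pi(J)$ by $x_1^c$, and its coefficient ideal with respect to the strict transform $V^s=V(z)$ of $V$ equals $x_1^{-o}\cdot\pi(\coeff_V(J))$. The residual order of $J'$ measured with $V^s$ and $D'$ coincides with that of $J$; consequently, if the residual order strictly increases, $V^s$ cannot be of weak maximal contact for $J'$, and a new hypersurface $V'=V(z')$ with $z'=z+\varphi(x_1,\ldots,x_n)$ must be used. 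The core of the proof is therefore to characterize the admissible translations $\varphi$ and read off from them the structural constraints claimed in (1)--(9).

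To derive (1)--(3), I would expand an element $f\in J$ of minimal weighted order as $f=\sum f_i z^i$ and require that, after pullback and division by $x_1^o$ on the coefficient ideal side, the translation by $\varphi$ strictly increases the order of $\coeff_{V'}(J')$. Writing out the effect of replacing $z$ by $z-\varphi(x)$ on $\inin_w(f)$, one sees by a Hasse-derivative argument that the leading form must be annihilated by every partial derivative in $z$ of order $\leq c-1$, forcing all nontrivial $z^i$-terms to occur only for $i$ divisible by some $p^e\geq p$. This gives $c=m\cdot p^e$ with $\gcd(m,p)=1$ and exhibits $\inin_w(f)$ as the $m$-th power of a purely inseparable polynomial $z^{p^e}+F(x)$ of degree $wp^e$, with $F$ not itself a $p^e$-th power (otherwise a translation would already work before the blowup). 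The identity $\ord\inin_w(f)=c\cdot w$ and the integrality of $w$ give $o=w\cdot c!$ with $w\geq 2$.

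For (4), I would compute $\inin_w(f)\circ\pi$ in the chart: the purely inseparable form $z^{p^e}+F(x)$ pulls back to $x_1^{p^e}(z^{p^e}+x_1^{-p^e}F(x_1,x_1(x_2+t_2),\ldots))$, so after factoring out the exceptional $x_1^{o}$ from the coefficient ideal the relevant new monomial becomes $F(1,x_2+t_2,\ldots,x_n+t_n)$. The residual-order jump is exactly the gain obtained by subtracting a $p^e$-th power in this series, so the increase at $a'$ translates into $\ord^{\mathrm{mod}\, p^e}_{Q_T}F > v$ where $v=\deg F-\sum r_i$ is the order of $F$ at $Q_T$ after the monomial factor $x^r$ has been separated. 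From here, (5)--(8) are consequences of unraveling this inequality: writing $F=x^r\cdot G$, studying how each monomial $x_1^{r_1}\cdots x_n^{r_n}G$ can be corrected modulo a $p^e$-th power forces the polynomial $G((1,x_2,\ldots,x_n)+\mathbf{t})$ to coincide modulo $p^e$-th powers with $\prod(x_i+t_i)^{-r_i}N(x_2,\ldots,x_n)^{p^e}$ up to degree $v$ (yielding (5)); the arithmetic constraint on the residues $\overline{r_i}$ then arises from counting the factor of $p^{\ell+1}$ that can be extracted after multiplying through by the $x_i^{r_i}$, since $F$ is by assumption a $p^\ell$-th but not a $p^{\ell+1}$-th power (giving (6)). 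Conditions (7) and (8) are read off from (5) by inspecting the supports and the $p^\ell$-th logarithmic Hasse derivatives of such a $G$.

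The last statement (9) is the extension of Moh's inequality to this non-hyper\-surface setting, and it will be the main obstacle. The strategy is to bound the difference of residual orders by analyzing precisely how much ``cancellation'' a translation $\varphi$ can achieve. Each correction by a $p^e$-th power moves the Newton polyhedron of the coefficient ideal, and a packing argument using the structure of $F$ dictated by (5)--(8), together with the fact that $F$ is not a full $p^e$-th power, shows that the admissible gain is at most $c!/p$: one cannot shift the weighted Newton polyhedron more than by the contribution of a single factor of $p$ absorbed from $c!$, because the smallest residue appearing in the Hasse-derivative expansion is $p^\ell$ and the scaling $c!/(c-i)$ at the relevant weighted vertex is exactly $c!/p$. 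The delicate point is to check that this bound is uniform across all admissible choices of $\varphi$ and of the new weak maximal contact $V'$, which requires a careful comparison of $\coeff_{V'}(J')$ with the original coefficient ideal, using the independence of the residual order from $\varphi$ established earlier.
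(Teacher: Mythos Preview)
Your outline has the right overall architecture---the increase forces a change of hypersurface after blowup, and the analysis of the admissible translation $\varphi$ should encode the structural constraints---but several of the load-bearing steps are either misstated or missing the mechanism that actually does the work.

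First, a concrete error: the coefficient ideal of $J'$ with respect to the strict transform $V^s=V(z)$ is \emph{not} $x_1^{-o}\cdot\pi(\coeff_V(J))$. If $f=\sum f_i z^i$ and $f'=x_1^{-c}\pi(f)$, then $f_i'=x_1^{i-c}\pi(f_i)$ and $(f_i')^{c!/(c-i)}=x_1^{-c!}\cdot\pi(f_i)^{c!/(c-i)}$; the exceptional exponent is $c!$, not $o$. This matters because your subsequent computation of the pullback of $z^{p^e}+F$ and the ``gain after subtracting a $p^e$-th power'' is normalized against this exponent.

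Second, the derivation of (1)--(3) does not go through as written. Saying that ``the leading form must be annihilated by every partial derivative in $z$ of order $\le c-1$'' would force $\inin_w(f)$ to be a constant times $z^c$, which is false. The paper's route is different and is what you are missing: it first produces a \emph{specific} cleaning element $q\in Q$ (built from the weighted initial form of $f_{c-p^e}$) such that $U=V(z-q)$ still has weak maximal contact with $J$ and its strict transform $U'$ has weak maximal contact with $J'$ and is compatible with $D'$. This yields a numerical bound $\ord I<\ord I_1'\le \ord_{Q_T}\inin(h)-\sum_{i\notin T}s_i$ for any $h\in K_1$ of minimal order. One then works entirely in $R$: writing $\inin_w(f)=\sum F_iz^i$ and $\inin_{w,(u,x)}(f)=\sum\widetilde F_i u^i$ with $\widetilde F_i=\sum_{k\ge i}\binom{k}{i}F_kH^{k-i}$ (where $H=\inin(q)$), the bound forces $\ord_{Q_T}\widetilde F_i>\ord_{Q_T}F_i$ whenever $F_i\ne 0$. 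Lucas' theorem on $\binom{k}{i}\bmod p$ is then what forces $F_i=0$ for $p^e\nmid i$, $F_c\ne 0$, and inductively $F_{ip^e}=\binom{m}{i}F^{m-i}$; a generic ``Hasse-derivative argument'' does not isolate this pattern.

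Third, your plan for (9) is the weakest point. The paper obtains the bound not by a Newton-polyhedron packing argument but by a one-line Hasse-derivative estimate: for any $k<e$ and any $i$ with $\partial_{x_i^{p^k}}F\ne 0$, one has $\ord_{Q_T}(F+H^{p^e})\le\ord_{Q_T}\partial_{x_i^{p^k}}F+p^k$, because $\partial_{x_i^{p^k}}$ kills $H^{p^e}$. Plugging this into the numerical bound above gives $\ord I_1'\le\ord I+\frac{c!}{p^e}\varepsilon_{i,k}$ with $\varepsilon_{i,k}\in\{0,p^k\}$; choosing $k=\ell$ (the largest exponent with $F$ a $p^\ell$-th power) immediately yields $\ord I_1'\le\ord I+c!/p$ and simultaneously proves (7) and (8). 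Your sketch does not exhibit any inequality linking $\ord_{Q_T}^{\bmod p^e}F$ to a derivative of $F$, so the bound $c!/p$ is not actually derived; the phrase ``the scaling $c!/(c-i)$ at the relevant weighted vertex is exactly $c!/p$'' is not correct (at $i=c-p^e$ the scaling is $c!/p^e$, and the extra factor $p^\ell$ comes from the derivative, not from the weight).
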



\no{\bf Comments.} The statements of the theorem crystallize a broader background which will be explained below.

(a) The theorem only tells us something about the exceptional multiplicities and the (weighted) tangent cone of the ideal $J$. It does not make any statement about the higher order terms of the elements of $J$.

(b) The multiplicity of the new exceptional component in $D'$ equals $\ord_P\KK -c!$ and is hence a multiple of $c!$. Let $x_1$ be the parameter defining this component. Then the center $Z'= V(z,x_1)$ in $\Spec(R')$ is contained in the equimultiple locus of $J'$, has normal crossings with $D'$ and can be blown up until the exceptional multiplicity of this component has dropped to $0$.  

(c) The residual order is a questionable resolution invariant as is exhibited by an example of an infinite sequence of permissible blowups where the residual order tends to infinity \st \cite{Indefinite_Increase}. In this sequence, however, the centers are not chosen of maximal dimension, so this is not yet a counterexample to the resolution of singularities in positive characteristic.

(d) The increase of the residual order can only happen if under the blowup at least two components of $D$ are lost when passing to the reference point $a'$ in the new exceptional component.  


(e) The increase of the residual order represents a serious obstacle for trying to transfer the proof of resolution of singularities in characteristic zero to positive characteristic. For surfaces, it can still be used, but has to be modified slightly so as to perform appropriately under blowup, see \cite{HW, HP_Surfaces}. Already for three-folds the situation is unclear and no efficient resolution invariant (for embedded resolution) seems to be known (for the non-embedded case, see \cite{Abhyankar_3_Folds, Cossart_Piltant_1, Cossart_Piltant_2, Cutkosky_3_Folds}).

(f) For a fixed prime number $p$, the arithmetic inequality for the residues of the multiplicities $r_i$ in assertion (6) of the theorem always holds when $T$ contains sufficiently many indices $i$ with $r_i\not\congruent p^{\ell+1}$.

(g) For fixed numbers $n$, $p$, $e$ and $\ell$ as in the theorem, a homogeneous polynomial $F(x)=x^r\cdot G(x)$ of degree divisible by $p^e$, but not a $p^e$-th power, defines via $f=z^{p^e}+F(x)$ a weighted homogeneous hypersurface singularity whose residual order increases under blowup if and only if $G(x)$ is of the form specified in assertion (5) and the multiplicities $r_i$ fulfill the arithmetic inequality in assertion (6) of the theorem. 

(h) Assertion (2) and the bound in (9) have been known to Moh in the case of a purely inseparable hypersurface singularity \cite{Moh}.







\big


\section{Auxiliary Results}  

\no The proof of the theorem will rely on the following more technical result.


\begin{proposition} 

In the situation of the theorem, there exists an automorphism of $R$ sending $z$ onto $\z=z-q$, for some $q\in Q$ of order $\ord q\geq\frac{o}{c!}>1$, and inducing the identity on $Q$, so that $U=V(u)$ has again weak maximal contact with $J$ (but may no longer be compatible with $D$), and so that the following two conditions are satisfied.

\begin{enumerate}

 \item  The strict transform $\Vp$ of $\V$ has weak maximal contact with $J'$ and is compatible with $D'$.


 \item Factorize the coefficient ideal $\KK_1'=\coeff_{\Vp}(J')$ of $J'$ with respect to $\Vp$ and $\sigma_1': R'/(\zp)\isom Q\subset R'$ into $\KK_1'=M_1'\cdot I_1'$ with $M_1'$ the principal monomial ideal defining $D'\cap \Vp$ in $\Vp$. Then the residual order $\ord\, I_1'$ of $J'$ with respect to $D'$ is bounded by \hh\med

\cl{$\ord\, I< \ord\, I_1'\leq \ord_{Q_\TT} \inin(h) - \sum_{i\not\in \TT} s_i$,}\med

\no for any element $h$ of minimal order $o$ of the coefficient ideal $\KK_1=\coeff_{\V}(J)$ of $J$ with respect to $\V$ and the ring inclusion $\sigma_1: R/(\z)\isom Q\subset R$, and where $s_i=\ord_{(x_i)}D$.

\end{enumerate}

\end{proposition}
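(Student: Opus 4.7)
The plan is to construct $q\in Q$ ``backwards'': first find a hypersurface $\Vp=V(\zp)$ in $R'$ which has weak maximal contact with $J'$ and is compatible with $D'$, then lift $\zp$ to $\z=z-q$ in $R$ so that the strict transform of $\V=V(\z)$ under $\pi$ equals $\Vp$.

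\emph{Existence of $\Vp$ and the lift.} Since $J$ admits a hypersurface of weak maximal contact and $\ord J'=\ord J$, the weak transform $J'$ also admits such a hypersurface, and among these one can arrange a choice $\Vp$ whose coefficient ideal factorizes compatibly with $D'$. Any such $\Vp$ through the reference point $a'$ has the form $V(z-\wt q)$ with $\wt q\in R'$; the requirement that $V(z-\wt q)$ realize the maximal order of $\coeff_{V(z-\wt q)}(J')$ forces $\ord\wt q\geq w-1$ (which also forces $w>1$, since otherwise the characteristic-zero-style argument would rule out a strict increase of the residual order). The blowup in the $x_1$-chart maps $z-q$ to $x_1z-\pi(q)$, whose strict transform (using $q\in P\cap Q$ so that $\V$ contains the center) is defined by $z-\pi(q)/x_1$. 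Imposing $\pi(q)/x_1=\wt q$ determines $q\in Q$ as the lift of $x_1\wt q$ through $\pi|_Q$; using the explicit blowup formulas $x_i\mapsto x_1(x_i+t_i)$ for $i\in T\sm\{1\}$, $x_i\mapsto x_1x_i$ for $i\in S\sm T$, and $x_i\mapsto x_i$ for $i\notin S$, this lift can be computed monomial-by-monomial and yields $\ord q\geq w$.

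\emph{Weak maximal contact of $\V$ with $J$.} Since $\ord q\geq w=o/c!$, substituting $z=\z+q$ into any $f=\sum_{i\geq 0}f_i(x)\,z^i\in J$ preserves weighted orders with respect to $(w,1,\ldots,1)$: each term of the expansion $f_i(\z+q)^i=\sum_j\binom{i}{j}f_i\z^{i-j}q^j$ has weighted order at least $\ord f_i+(i-j)w+j\ord q\geq\ord f_i+iw$. Hence $\coeff_{\V}(J)$ retains the order $o$ of $\coeff_V(J)$, so $\V$ has weak maximal contact with $J$, although $\V$ need not be compatible with $D$.

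\emph{Bound on the residual order.} The lower inequality $\ord I<\ord I_1'$ is the hypothesis of the theorem. For the upper bound, pick $h\in\tJo$ of minimal order $o$ and track it through $\pi$ via the standard coefficient-ideal transformation $\pi(\tJo)=x_1^{c!}\cdot\tJop$, giving $h^w:=\pi(h)/x_1^{c!}\in\tJop$. The explicit action of $\pi$ on $\init(h)\in\K[x_1,\ldots,x_n]$ extracts, on top of the exceptional factor $x_1^{c!}$ (absorbed into $M_1'$), the monomial contributions $x_j^{s_j}$ for $j\notin T$ coming from the components of $D$ that survive as components of $D'$ through $a'$ (also absorbed into $M_1'$). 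What remains lies in $I_1'$ and has order at most $\ord_{Q_T}\init(h)-\sum_{j\notin T}s_j$, yielding the desired bound since this holds for every minimal-order $h\in\tJo$. The technical heart of the argument lies precisely in this bookkeeping: one must carefully separate, inside $\tJop$ at $a'$, the monomial part $M_1'$ — which collects both the new exceptional factor $x_1^{\ord_P\tJo-c!}$ and the surviving components $x_j^{s_j}$ for $j\notin T$ — from the residual part $I_1'$, and verify that no spurious $x_1$-factor inflates the estimate.
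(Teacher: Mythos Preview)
The central gap is in your lifting step. You write that ``imposing $\pi(q)/x_1=\wt q$ determines $q\in Q$ as the lift of $x_1\wt q$ through $\pi|_Q$,'' to be computed monomial by monomial. But $\pi|_Q$ is \emph{not} surjective: in the monomial description $y_1\mapsto x_1$, $y_i\mapsto x_1x_i$ for $i\in S\setminus\{1\}$, $y_i\mapsto x_i$ for $i\notin S$, a monomial such as $x_1x_i^2$ with $i\in S\setminus\{1\}$ is not in the image (it would require a negative exponent of $y_1$). So an arbitrary $\wt q\in\kk[[x]]$ admits no preimage $q\in Q$ with $\pi(q)=x_1\wt q$, and your construction breaks down. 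Relatedly, the assertion that ``among these one can arrange a choice $\Vp$ whose coefficient ideal factorizes compatibly with $D'$'' is exactly the delicate point in positive characteristic and cannot simply be quoted.

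The paper avoids both problems by not lifting $\wt q$ directly. Using Lemma~\ref{cleaning_lemma}(4) applied to a $z$-regular element $f'\in J'$, the \emph{initial form} $g'=\inin(\wt g)$ of any maximizing shift satisfies $g'^{\,p^e}=\lambda\cdot\inin(f'_{c-p^e})$ with $f'_{c-p^e}=x_1^{-p^e}\pi(f_{c-p^e})$; hence $g'$ is, up to a $p^e$-th root, the blowup image of something already living in $Q$, and a lift $q$ with $q^{p^e}=\lambda\cdot\inin_\sigma(f_{c-p^e})$ exists. This same identity gives both $\ord q\geq o/c!$ and, via Lemma~\ref{cleaning_lemma}(3), the compatibility of $\Vp=V(z-g')$ with $D'$, as well as the factorization $K_1=(\prod_{i\notin T}x_i^{s_i})\cdot I_1$ needed later. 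Since $g'$ is only the initial form, one then iterates (replacing $\wt g$ by $\wt g-g'$) finitely many times until $\Vp$ has weak maximal contact. Your sketch of the inequality in (2) is in the right spirit, but it too relies on the factorization $K_1=(\prod_{i\notin T}x_i^{s_i})\cdot I_1$, which only follows once $q$ has been produced in this specific way.
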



To show this, we need two lemmata. Lemma \ref{coeff_ideal_lemma} will clarify how the orders of the coefficients $f_i$ in the expansion $f=\sum_{i\geq0}f_iz^i$ are related to the order of the coefficient ideal. In Lemma \ref{cleaning_lemma} we will investigate the effect of coordinate changes $\z=z-\g$ with $\g\in\kk[[\x]]$ on the coefficient ideal. In particular, we will see that if the coordinate change increases the order of the coefficient ideal with respect to $V(z)$, then the element $\g$ has to be \hh of a very specific form. 


\begin{lemma} \label{coeff_ideal_lemma}
 Let $R$ be the power series ring $\kk[[z,\x]]$ with $\x=(x_1,\ldots,x_n)$ for some field $\kk$. Denote the maximal ideal of $R$ by $\mm$. Let $J\subset R$ be an ideal of order $\ord J=c$. Let each element $f\in J$ have the expansion $f=\sum_{i\geq0}f_iz^i$ with $f_i\in\kk[[\x]]$.
 
 Set $\Jo=\coeffJ$ for $V=V(z)$ and a section $\rho:R/(z)\cong \QQ\subset R$ of $R\map R/(z)$. Define $o=\ord\Jo$ and $w=\oc$.
 
 Then the following statements hold:
 \begin{enumerate}
  \item The order of $\Jo$ can be expressed as 
  \[o=\min_{f\in J}\min_{i<c}\frac{c!}{c-i}\ord f_i.\]
  Consequently, for all elements $f\in J$ and indices $i<c$, the inequality
  \[\ord f_i\geq (c-i)w\]
  holds.
  \item $o\geq c!$.
  \item $o>c!$ holds if and only if $J\congruent (z^c)$ modulo $\mm^{c+1}$.
 \end{enumerate}
\end{lemma}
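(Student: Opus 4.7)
The plan is to prove the three assertions in sequence from the definition $\KK=\sum_{i<c}(f_i,\,f\in J)^{c!/(c-i)}$, using only two elementary properties of the $(\x)$-adic order in the regular local ring $\kk[[\x]]$: that the order of a sum of ideals equals the minimum of their orders, and that $\ord A^k=k\cdot\ord A$ for every ideal $A$. The second property holds because $\gr \kk[[\x]]$ is the polynomial ring $\kk[\x]$, hence an integral domain, so initial forms multiply; consequently $a^k$ attains order $k\cdot\ord A$ in $A^k$ whenever $a\in A$ attains $\ord A$.

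For (1), these two facts applied summand-wise to the defining sum give
\[o=\ord \KK=\min_{i<c}\frac{c!}{c-i}\cdot\min_{f\in J}\ord f_i=\min_{f\in J}\min_{i<c}\frac{c!}{c-i}\ord f_i,\]
which is the displayed formula. The inequality $\ord f_i\geq (c-i)w$ then falls out by rearranging $\frac{c!}{c-i}\ord f_i\geq o=c!\cdot w$.

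For (2), I would observe that $(z,\x)$ are regular parameters in $R$, so $\ord f=\min_i(\ord f_i+i)$ for every $f\in J$; since $\ord f\geq c$, this forces $\ord f_i\geq c-i$ for all $i<c$, and plugging into (1) yields $o\geq c!$. For (3), both directions unpack the congruence $J\congruent (z^c)$ modulo $\mm^{c+1}$ in terms of the homogeneous components of the $f_i$. The congruence is equivalent to the degree-$c$ initial form of every $f\in J$ being a scalar multiple of $z^c$ together with some $f\in J$ realizing a nonzero such multiple; the second requirement is automatic since $\ord J=c$ produces such an $f$, and the first, read off $f=\sum f_iz^i$ termwise, amounts to the vanishing of the degree-$(c-i)$ component of $f_i$ for every $i<c$, i.e.\ to $\ord f_i>c-i$ for every $f\in J$ and every $i<c$. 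By (1) this strict inequality uniform in $f,i$ is the same as $\frac{c!}{c-i}\ord f_i\geq c!+1>c!$ uniformly, i.e.\ $o>c!$. Nothing is genuinely delicate; the one point worth flagging is the integrality of the $\ord f_i$, which is what lets strict inequalities transfer cleanly through the rescaling by $c!/(c-i)$ and so yields the strict conclusion $o>c!$ rather than just $o\geq c!$.
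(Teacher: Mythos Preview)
Your proof is correct and is precisely the verification that the paper declares ``immediate from the definition of the coefficient ideal.'' You have simply unpacked that one-line proof: the formula in (1) follows from the multiplicativity and additivity of order in the regular ring $\kk[[x]]$, (2) from $\ord f\geq c$ read termwise, and (3) from translating the congruence $J\equiv(z^c)$ modulo $m_R^{c+1}$ into the vanishing of the degree-$(c-i)$ parts of the $f_i$ for $i<c$ and then invoking (1); the integrality remark you make is exactly what makes the strict inequality in (3) go through.
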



\begin{proof}
 Immediate from the definition of the coefficient ideal. 
\end{proof}


\begin{lemma} \label{cleaning_lemma}
 Let $R$ be the power series ring $\kk[[z,\x]]$ with $\x=(x_1,\ldots,x_n)$ where $\kk$ is a field of characteristic $p>0$. Consider a change of coordinates $\z=z-\g$ where  $\g\in \kk[\x]$ is a homogeneous polynomial, and define $V=V(z)$, $\V=V(\z)$. Let $J\subset R$ be any ideal of order $\ord J=c$ and let $p^e$ the largest $p$-th power dividing $c$.
 
Let $\Jo=\coeffJ$ and $\tJo=\coeff_{\V}(J)$, 
and set $o=\ord\Jo$, $o_1=\ord\tJo$, and $\w=\oc$. The following statements hold:
 
 \begin{enumerate}
  \item If $\deg \g \geq \w$, then $o_1\geq o$.
  \item If $\deg\g<\w$ and there exists an element $f\in J$ that is $z$-regular of order $c$, then $o_1=c!\cdot\deg\g<o$.
  \item Let $1\leq i\leq n$ be an index. If $\ord_{(x_i)}\g\geq \frac{1}{c!}\ord_{(x_i)}\Jo$, then $\ord_{(x_i)}\tJo\geq\ord_{(x_i)}\Jo$.
  \item Let $f\in J$ be an element that is $z$-regular of order $c$. Let $f$ have the expansion $f=\sum_{i\geq0}f_iz^i$ with $f_i\in\kk[[\x]]$. If $o_1>o$ holds, then $\deg \g=\w$ and $\g$ fulfills
  \[\g^\q=\lambda\cdot\init(f_{c-\q})\]
  for a non-zero constant $\lambda\in K^*$.
 \end{enumerate}
\end{lemma}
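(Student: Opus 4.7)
My plan rests on the binomial expansion of the coordinate change. For $f=\sum_{i\geq 0}f_i z^i\in J$ and $z=\z+\g$, the new expansion $f=\sum_j \tilde f_j\z^j$ has coefficients
\[ \tilde f_j \;=\; \sum_{i\geq j}\binom{i}{j}\,f_i\,\g^{\,i-j}. \]
Lemma \ref{coeff_ideal_lemma}(1) supplies $\ord f_i\geq(c-i)\w$, and the homogeneity of $\g$ gives $\ord\g^{\,i-j}=(i-j)\deg\g$; I would reduce all four assertions to termwise estimates on this sum.

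For (1), I note that $\deg\g\geq\w$ makes each summand satisfy $\ord(f_i\g^{i-j})\geq(c-i)\w+(i-j)\w=(c-j)\w$, so $\ord\tilde f_j\geq(c-j)\w$ and hence $o_1\geq o$. For (2), I use the $z$-regular $f$: since $f_c$ is a unit the summand $f_c\g^c$ of $\tilde f_0$ has order exactly $c\deg\g$, while every $f_i\g^i$ with $i<c$ has order $\geq(c-i)\w+i\deg\g>c\deg\g$ when $\deg\g<\w$; so $\ord\tilde f_0=c\deg\g$ and therefore $o_1\leq c!\deg\g$. The matching lower bound $o_1\geq c!\deg\g$ comes from the analogous estimate for arbitrary $f'\in J$, with the minimum in $\min_{i\geq j}[(c-i)\w+(i-j)\deg\g]$ realized at $i=c$. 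Part (3) would be handled by running the same termwise computation with $\ord_{(x_i)}$ in place of $\ord$.

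The harder part is (4). First I would pin down $\deg\g=\w$. By (2) we already have $\deg\g\geq\w$. Assuming for contradiction that $\deg\g>\w$, I choose $f\in J$ and an index $i^*<c$ realizing the minimum in $o=c!\w$, so that $\ord f_{i^*}=(c-i^*)\w$. In $\tilde f_{i^*} = f_{i^*} + \sum_{i>i^*}\binom{i}{i^*}f_i\g^{\,i-i^*}$ the first summand has order exactly $(c-i^*)\w$, whereas every further summand has order at least $(c-i^*)\w+(i-i^*)(\deg\g-\w)>(c-i^*)\w$, leaving no room for cancellation at the leading order. Hence $\ord\tilde f_{i^*}=(c-i^*)\w$, giving $o_1\leq o$ and contradicting the hypothesis.

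With $\deg\g=\w$ the substitution $z=\z+\g$ preserves the weighted filtration with weights $(\w,1,\ldots,1)$, so the weighted initial forms satisfy $\inin_\w(\tilde f)(\z,\x)=\inin_\w(f)(\z+\g,\x)$. The assumption $o_1>o$ forces $\ord\tilde f_j>(c-j)\w$ for every $j<c$, leaving only the $\z^c$-coefficient $f_c(0)\in\kk^*$ in $\inin_\w(\tilde f)$; therefore $\inin_\w(f)(\z+\g,\x)=f_c(0)\,\z^c$, equivalently $\inin_\w(f)(z,\x)=f_c(0)\,(z-\g)^c$ in $\kk[z,\x]$. Reading off the coefficient of $z^{c-\q}$ on the right gives $\init(f_{c-\q})=f_c(0)\binom{c}{\q}(-\g)^{\q}$; Lucas's theorem applied to $c=m\q$ with $p\nmid m$ shows $\binom{c}{\q}\equiv m\not\equiv 0\pmod p$, so with $\lambda=\big[(-1)^{\q}f_c(0)\binom{c}{\q}\big]^{-1}\in\kk^*$ one obtains the required identity $\g^{\q}=\lambda\cdot\init(f_{c-\q})$. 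The main obstacle will be precisely this last translation: recognizing that the vanishing profile of all $\tilde f_j$ with $j<c$ is exactly the statement that the weighted initial form factors as $f_c(0)(z-\g)^c$; once this is seen, the binomial expansion pins $\g^{\q}$ down uniquely.
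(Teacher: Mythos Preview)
Your treatment of parts (1)--(3) matches the paper's: both reduce to the same termwise estimate on $\tilde f_j=\sum_{i\geq j}\binom{i}{j}f_i\g^{i-j}$ combined with Lemma~\ref{coeff_ideal_lemma}(1). (A small notational slip: in your argument for $\deg\g=\w$ you write ``choose $f\in J$'' for an element realizing the minimum, while $f$ is already the given $z$-regular element; a different letter would be cleaner, but the mathematics is unaffected.)

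For part (4) your route is genuinely different from the paper's and, in a sense, cleaner. The paper argues coefficient by coefficient: it first shows, via a descending induction using Lucas's theorem to kill $\binom{c}{i}$ for $c-\q<i<c$, that $\ord f_i>(c-i)\w$ in that range; then it checks separately that $\ord f_{c-\q}=\q\w$ must hold (else $\init(\tilde f_{c-\q})=\binom{c}{\q}f_c(0)\g^{\q}$ would force $o_1\leq o$); and finally it reads off $\init(f_{c-\q})+\binom{c}{\q}f_c(0)\g^{\q}=0$ from $\ord\tilde f_{c-\q}>\q\w$. Your argument bypasses this index-by-index bookkeeping: once $\deg\g=\w$, the substitution $z=\z+\g$ is weight-preserving, so the weighted initial form is covariant, and the hypothesis $o_1>o$ collapses $\inin_\w(\tilde f)$ to $f_c(0)\z^c$; inverting the substitution yields the global factorization $\inin_\w(f)=f_c(0)(z-\g)^c=f_c(0)(z^{\q}-\g^{\q})^m$ in one stroke. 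This is strictly stronger than the lemma's conclusion (it pins down \emph{all} coefficients of $\inin_\w(f)$, not just the one at $z^{c-\q}$) and in fact already contains the purely-inseparable shape that the paper establishes only later, in the proof of assertion (3) of the main theorem. The paper's approach, by contrast, stays closer to the raw coefficient formulas and does not invoke the graded-ring viewpoint; what it buys is that one never needs to justify the covariance of $\inin_\w$ under weight-homogeneous coordinate changes, at the cost of a longer case analysis.
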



\begin{proof}
 Let each element $f\in J$ have expansions $f=\sum_{i\geq0}f_iz^i$ and $f=\sum_{i\geq0}\wt f_i\z^i$ with $f_i,\wt f_i\in\kk[[\x]]$. Then 
 \[\wt f_i=\sum_{k\geq i}\binom{k}{i}f_k\g^{k-i}.\]
 Notice that an element $f$ is $z$-regular of order $c$ if and only if the coefficient $f_c$ is a unit.
 
 Statements (1), (2) and (3) can be verified directly by using the formula for $\wt f_i$ and Lemma \ref{coeff_ideal_lemma} (1).
 
 To prove statement (4), let $f\in J$ be $z$-regular of order $c$. If $\deg \g>\w$, it is straightforward to show that $o_1=o$ holds. By statement (2) this implies that $\deg\g=\w$ has to hold.
 
 Assume now that there exists an index $c-\q<i<c$ such that $\ord f_i=(c-i)\w$. Let $i$ be maximal with this property. Notice that $\binom{c}{i}\equiv0\pmod p$ by Lucas' theorem on binomial coefficients in characteristic $p>0$. Using the form of $\wt f_i$, the maximality of $i$ and the fact that $\binom{c}{i}\equiv0\pmod p$, it follows that $\ord \wt f_i=(c-i)\w$. Consequently, $o_1\leq o$ holds by Lemma \ref{coeff_ideal_lemma} (1), contradicting the assumption.
 
 Hence, $\ord f_i>(c-i)\w$ holds for all indices $c-\q<i<c$. If we had $\ord f_{c-\q}>\q\w$, then
 \[\init(\wt f_{c-\q})=\binom{c}{\q}f_c(0)\g^\q.\]
 Since $\binom{c}{\q}\not\equiv0\pmod p$ by Lucas' theorem, this implies that $\ord \wt f_{c-\q}=\q\w$ and hence, $o_1\leq o$ by Lemma \ref{coeff_ideal_lemma} (1), again a contradiction.
 
 Thus, $\ord f_{c-\q}=\q\w$. Since $o_1>o$,  it follows that $\ord \wt f_{c-\q}>\q\w$. This gives 
 \[\init(f_{c-\q})+\binom{c}{\q}f_c(0)\g^\q=0,\]
 which proves the assertion. 
\end{proof}


\begin{proof}[Proof of the proposition:]
Define parameters $y=(y_1,\ldots,y_n)$ by setting
 \[y_i=\begin{cases} x_i-t_ix_1 & \text{for $i\in \To$,}\\
		    x_i & \text{otherwise.}
      \end{cases}\]
 Notice that the ideal $P$ is generated by the parameters $z$ and $y_i$ for $i\in S$. Also, $Q_T=(y_2,\ldots,y_n)$. Further, the map $\pi:R\to R'$ has the following simple form with respect to $(z,y)$: $z\to x_1z$, $y_1\to x_1$, $y_i\to x_1x_i$ for $i\in S\setminus\{1\}$ and $y_i\to x_i$ for $i\notin S$. One also says that the blowup map $\pi:R\to R'$ is monomial with respect to the parameters $(z,y)$. Monomial blowup maps have the advantage that they make calculations in coordinates particularly easy.

 We will begin with assertion (1).  To this end, let us first verify that $J\equiv (z^c)$ modulo $\mm^{c+1}$.
 
 Recall that the strict transform $V'=V(z)$ of $V$ in $\Spec(R')$ is non-empty since $\ord J'=\ord J$ holds and $V$ has weak maximal contact with $J$. Further, it is easy to see that the regular hypersurface $V'$ is compatible with $D'$.
 
 Recall that the center of blowup $Z$ is contained in the equimultiple locus of $I$. Thus, it \hh is easy to see that the ideal $I'$ in the factorization $\coeff_{V'}(J')=M'\cdot I'$ fulfills $\ord I'\leq\ord I$.  Since we assumed that the residual order increases under the local blowup $\pi$, we conclude from this that $V'$ does not have weak maximal contact with $J'$.
 
 Assume now that $J\not\equiv(z^c)$ modulo $\mm^{c+1}$. Since $V$ has weak maximal contact with $J$, this implies by Lemma \ref{coeff_ideal_lemma} (3) that $\ord\coeff_{\wt V}(J)=c!$ holds for all regular hypersurfaces $\wt V\subset\Spec(R)$. Consequently, by Lemma \ref{coeff_ideal_lemma} (3) there is no regular parameter $u\in R$ for which $J\equiv(u^c)$ modulo $\mm^{c+1}$ holds. It is straightforward to verify that this implies that there is also no regular paramter $\wt u\in R'$ for which $J'\equiv(\wt u^c)$ modulo $\mmp^{c+1}$ holds. This would imply by Lemma \ref{coeff_ideal_lemma} that any regular hypersurface $\Vpp\subset\Spec(R')$ has weak maximal contact with $J'$. Since we already know that $V'$ does not have weak maximal contact with $J'$, this is a contradiction. Hence, $J\equiv(z^c)$ modulo $\mm^{c+1}$ holds.
 
 It is immediate to see that this implies the existence of an element $f\in J$ which is $z$-regular of order $c$. Set $f'=x_1^{-c}\pi(f)\in J'$. Then the element $f'$ is also $z$-regular of order $c$. Let these elements have the expansions $f=\sum_{i\geq0}f_iz^i$ and $f'=\sum_{i\geq0}f_i'z^i$ with $f_i,f_i'\in\kk[[\x]]$.
 
 Let $\Vpp=V(\zpp)\subset\Spec(R')$ be a regular hypersurface which has weak maximal contact with $J'$. Set $\Kpp=\coeff_{\Vpp}(J')$. If the element $\zpp\in R'$ is not $z$-regular, then it is straightforward to verify with Lemma \ref{coeff_ideal_lemma} (1) and using the fact that $f$ is $z$-regular of order $c$ that $\ord \Kpp=c!$. This contradicts the fact that $V'$ does not have weak maximal contact with $J'$. Thus, we may assume that \hh $\zpp=z-\wt g$ for an element $\wt g\in\kk[[\x]]$.
 
 Set $K'=\coeff_{V'}(J')$. By Lemma \ref{cleaning_lemma} we know that $\ord \wt g=\frac{1}{c!}\ord K'$. Set $\gp=\init(\wt g)$. Further, define $\zp=z-\gp$, $\Vp=V(\zp)\subset\Spec(R')$ and $K_1'=\coeff_{\Vp}(J')$. Then it is clear by Lemma \ref{cleaning_lemma} that the chain of inequalities
 \[\ord \Kpp\geq \ord K_1'>\ord K'\]
 holds.
 We know by Lemma \ref{cleaning_lemma} (4) that
 \[\gp^\q=\lambda\cdot\init(f_{c-\q}')\]
 for some non-zero constant $\lambda$, where $p^e$ is the largest $p$-th power dividing $c$. Notice that $f_{c-\q}'=x_1^{-\q}\pi(f_{c-\q})$.
 
 It is clear that for all indices $i=1,\ldots,n$ the inequality
 \[\ord_{(x_i)}\gp\geq \frac{1}{\q}\ord_{(x_i)}f'_{c-\q}\geq\frac{1}{c!}\ord_{(x_i)}K'\]
 holds. Since $V'=V(z)$ is compatible with $D'$, this implies by Lemma \ref{cleaning_lemma} (3) that $\Vp$ is also compatible with $D'$.
 
 Since the map $\pi:R\to R'$ is monomial with respect to the parameters $(z,\y)$, it is straightforward to verify that there exists an element $\g\in R$ that fulfills $\gp=x_1^{-1}\pi(\g)$ and is of the form
 \[\g^\q=\lambda\cdot\init_\sigma(f_{c-\q})\] 
 where $\init_\sigma(f_{c-\q})$ denotes the weighted initial form of $f_{c-\q}$ with respect to the weights $\sigma(y_i)=2$ for $i\in S\setminus\{1\}$ and $\sigma(y_i)=1$ for all other indices $i$. Set $\z=z-\g$. Then
 \[\pi(\z)=x_1(z-\gp)=x_1\zp.\]
Hence, the hypersurface $\Vp$ is the strict transform of the hypersurface $\V=V(\z)$. Set $K_1=\coeff_{\V}(J)$.
 
 We will now show that $\V$ has weak maximal contact with $J$ and $K_1$ has a factorization
 \[\tJo=\Big(\prod_{i\notin \A}x_i^{s_i}\Big)\cdot I_1\]
 for some ideal $I_1$ of $Q$.
 Since \hh $\g^\q=\lambda\cdot \init_\sigma(f_{c-\q})$, it is clear that the inequality
 \[\ord \g\geq \frac{1}{\q}\ord f_{c-\q}\geq \frac{o}{c!}\]
 holds, as well as, for all indices $i\notin \A$,  the inequalities
 \[\ord_{(x_i)}\g\geq \frac{1}{\q}\ord_{(x_i)}f_{c-\q}\geq \frac{1}{c!}\ord_{(x_i)}K\]
(since $x_i=y_i$ for $i\notin\A$). By Lemma \ref{cleaning_lemma} (1) and (3) this implies that
 \[\ord \tJo\geq \ord \Jo=o\]
 holds and that $\tJo$ has the claimed factorization. Since the hypersurface $V$ already had weak maximal contact with $J$, we must have $\ord\tJo=o$. Hence, also $\V$ has weak maximal contact with $J$.
 
 
 We now consider two cases. If the hypersurface $\Vp$ has weak maximal contact with $J'$, we are done. On the other hand, if $\ord\Kpp>\ord K_1'$, we may replace $\wt g$ by $\wt g-\gp$ and repeat the whole argument. Since the order of $\Kpp$ is finite, we can thus construct hypersurfaces $\V$ and $\Vp$ with the claimed properties after finitely many iterations.
 
 To prove assertion (2), we rewrite the claimed inequality so as to allow a calculation in coordinates. Let $h\in\tJo$ be an element of minimal order $o$. Define for a tuple $\gamma=(\gamma_1,\ldots,\gamma_n)\in\N^n$ the number
 \[|\gamma|_S=\sum_{i\in S}\gamma_i.\]
 Set $s=(s_1,\ldots,s_n)$. Since $\Vp$ is compatible with $D'$, we know that $\tJop$ is of the form 
 \[\tJop=\Big(x_1^{\dd+|s|_S-c!}\prod_{i\notin\A}x_i^{s_i}\Big)\cdot I_1'.\]
 It is straightforward that $x_1^{-c!}\pi(h)\in\tJop$. This implies that
 \[\ddp=\ord \tJop-(\dd+|s|_S-c!)-\sum_{i\notin\A}s_i\]
 \[\leq\ord \pi(h)-\dd-|s|_S-\sum_{i\notin\A}s_i.\]
 Hence, it remains to verify that the inequality
 \[\ord\pi(h)\leq \ord_{Q_T}\init(h)+\dd+|s|_S\]
 holds. 
 Let $h$ have the following power series expansion with respect to $y$:
 \[h=\sum_{\alpha\in\N^n}c_\alpha \y^\alpha.\]
 There exists a multi-index $\beta\in\N^n$ such that $c_\beta\neq0$, $|\beta|=o$, and 
 \[\sum_{i\geq 2}\beta_i=\ord_{Q_T}\init(h).\]
 Since \hh \st $K_1=\coeff_{\V}(J)=\Big(\prod_{i\notin \A}x_i^{s_i}\Big)\cdot I_1$, we know that $\beta_i\geq s_i$ for all indices $i\notin\A$. Consequently,
 \[|\beta|_S=o-\sum_{i\notin S}\beta_i\leq o-\sum_{i\notin S}s_i=\dd+|s|_S.\] 
 Further,
 \[\pi(h)=\sum_{\alpha\in\N^n}c_\alpha x_1^{|\alpha|_S}\prod_{i\geq2}x_i^{\alpha_i}.\]
 From this we conclude that
 \[\ord\pi(h)\leq |\beta|_S+\sum_{i\geq 2}\beta_j\leq \ord_{Q_T}\init(h)+\dd+|s|_S.\]
 This proves the claimed inequality.
\end{proof}


\section{Proof of the Theorem}  

\no Using the bound for the order of $I_1'$ established in the proposition, we can now prove the theorem without considering the coefficient ideals of the weak transform $J'$ of $J$. Instead, we will directly work in $R$ with the coefficient ideals $\Jo$ and $\tJo$ of $J$.

\begin{proof}[Proof of the theorem:]
 Let $\g$, $\z$, $\tJo$ and $s_i$ be defined as in the proposition. Set $w=\frac{o}{c!}$. Let $p^e$ be the biggest $p$-th power dividing $c$ and set $m=\frac{c}{p^e}$. \st Notice that $m\equiv \binom{c}{p^e}\pmod p$. Recall that $\ord \g\geq w>1$.
 
 
 We begin with assertions (1), (2) and (3). Let $f\in J$ be an element of minimal weighted order $c\w$. Let the weighted initial form of $f$ have the expansion \hh $\init_\w(f)=\sum_{i\geq0}F_iz^i$ with respect to the coordinates $(x,z)$, with $F_i\in \kk[\x]$. Hence, either $F_i=0$ or $F_i$ is a homogeneous polynomial of degree $\deg F_i=(c-i)\w$ for all indices $i$. In particular, $F_i$ is $0$ for all indices $i>c$.
 
 Let $i<c$ be an index with $F_i\neq 0$. Due to the factorization $\Jo=M\cdot I$, the element $F_i$ has a factorization 
 \[F_i=\prod_{j=1}^nx_j^{m_j}\cdot G_i\]
 with $m_j\geq\cic s_j$ for some polynomial $G_i$. Consequently,
 \[\ord_{Q_T}F_i=\underbrace{\ord_{Q_T}\prod_{j\in\A}x_j^{m_j}}_{=0}+\ord_{Q_T}\prod_{j\notin\A}x_j^{m_j}\cdot G_i\]
 \[\leq \deg \prod_{j\notin\A}x_j^{m_j}\cdot G_i\]
 \[\leq\frac{c-i}{c!}\dds.\]
 Set $\G=\init(\g)$. Denote by $\init_{w,(u,x)}(f)$ the weighted initial form of $f$ with respect to the parameters $(u,x)$ and the weight vector $(w,1,\ldots,1)$. Let this weighted initial form have the expansion \hh $\init_{w,(u,x)}(f)=\sum_{i\geq0}\wF_i\z^i$ with $\wF_i\in\kk[\x]$. If $\ord\g=w$, then
 \[\wF_i=\sum_{i\leq k\leq c}\binom{k}{i} F_k\G^{k-i}.\]
 On the other hand, if $\ord\g>w$, then $\wF_i=F_i$.
 Notice that if $\wF_i\neq0$ holds for an index $i<c$, then $\wF_i^\frac{c!}{c-i}$ is the initial form of an element of minimal order of $\tJo$. Hence, we know by the proposition \hh and the basic assumption $\ord I_1'>\ord I$ that 
\hh \begin{equation}\ord_{Q_T} \wF_i\geq \cic{\Big(\dd_1'+\rs\Big)}>\cic{\Big(\dd+\rs\Big)}. \tag{$\ast$} \label{eqwtFi}\end{equation}
 Consequently, for all indices $i<c$, either $F_i=0$ or $\wF_i\neq F_i$ holds. 
 
 This implies that $\ord\g=w$. Consequently, $w$ is an integer $w\geq 2$ and we have proved (2).
 
 We show that $F_c\neq 0$. Assume the contrary, and let $i<c$ be maximal with $F_i\neq0$. Then $\wF_i=F_i$ would hold by the formula for $\wF_i$, contradiction. This shows $F_c\neq0$. Thus, $f$ is $z$-regular of order $c$. After multiplication with a constant, we may assume that $F_c=1$.
 
 Assume that there exists an index $i<c$ that is not divisible by $\q$ such that $F_i\neq0$. Let $i$ be maximal with this property. Then it follows that
 \[\wF_i=F_i+\sum_{\substack{i<k\leq c\\ \q\mid k}}\underbrace{\binom{k}{i}}_{\equiv0}F_k\G^{k-i}+\sum_{\substack{i<k<c\\ \q\nmid k}}\binom{k}{i}\underbrace{F_k}_{=0}\G^{k-i}=F_i,\]
contradiction. Therefore $F_i$ is $0$ for all indices $i$ that are not divisible by $\q$.
 Set 
 \[F=\binom{c}{\q}^{-1}F_{c-\q}\]
 and recall that $\binom{c}{\q}\not\equiv0\pmod p$ by Lucas' theorem.
 Since $\wF_{c-\q}=\binom{c}{\q}(F+\G^\q)$, we get from (\ref{eqwtFi}) the inequality
 \begin{equation}\ord_{Q_T}(F+\G^\q)>\qc \dds. \tag{$\ast\ast$} \label{eqwtQT}\end{equation}
 Next, we establish for all indices \hh $i<\m=\frac{c}{p^e}$ the equality
 \[F_{i\q}=\binom{\m}{i}F^{\m-i}.\]
  Notice that the equality holds by definition for $i=\m-1$. Let $i<\m-1$ be maximal with $F_{i\q}\neq \binom{\m}{i}F^{\m-i}$. Then 
 \[\wF_{i\q}=F_{i\q}+\sum_{i<k\leq \m}\underbrace{\binom{k\q}{i\q}}_{\equiv\binom{k}{i}}F_{k\q}\G^{\q(k-i)}\]
 \[=F_{i\q}+\sum_{i<k\leq \m}\underbrace{\binom{k}{i}\binom{\m}{k}}_{=\binom{\m}{i}\binom{\m-i}{k-i}}F^{\m-k}\G^{\q(k-i)}\]
\hh  \[=F_{i\q}-\binom{\m}{i}F^{\m-i}+\binom{\m}{i}(F+H^\q)^{\m-i}.\]
 Together with the inequalities (\ref{eqwtFi}) and (\ref{eqwtQT}), this implies that \hh
 \[\ord_{Q_T}\Big(F_{i\q}-\binom{\m}{i}F^{\m-i}\Big)>(\m-i)\frac{\q}{ c!}\dds.\]
 But the factorization $\Jo=M\cdot I$ implies that
\hh  \hh \[\ord_{x_j}\Big(F_{i\q}-\binom{\m}{i}F^{\m-i}\Big)\geq(\m-i)\frac{\q}{ c!}s_j\]
 holds for all indices $j\in\A$. It follows that \hh \hh \st
 \[\ord_{Q_T}\Big(F_{i\q}-\binom{\m}{i}F^{\m-i}\Big)\leq \ord\Big(F_{i\q}-\binom{\m}{i}F^{\m-i}\Big)-(\m-i)\frac{\q}{ c!}\sum_{j\in\A}s_j\]
 \[= (\m-i)\frac{\q}{ c!}\dds,\]
 which contradicts the above inequality. Therefore
 \[\init_\w(f)=(z^\q+F)^{\m}\]
 holds. 
 
 Let now $h\in J$ be another element of weighted order $c\w$. Let $h$ have the expansion $h=\sum_{i\geq0}h_iz^i$. Using the same argument as before, we know that $h$ is $z$-regular of order $c$. Hence, $h_c$ is a unit. Consider the element $h-h_c(0)\cdot f\in J$. Since this element is not $z$-regular of order $c$, we know that its weighted order is strictly larger than $c\w$. This implies that
 \[\init_\w(h)=h_c(0)\cdot\init_\w(f)=h_c(0)\cdot (z^\q+F)^{\m}.\]
 Assume that $F$ is a $\q$-th power. Set $z_1=z+F^\frac{1}{\q}$ and let $V_1$ be the regular hypersurface $V_1=V(z_1)$ in $\Spec(R)$. Then it is straightforward to verify that $\ord\coeff_{V_1}(J)>o$. Thus, the hypersurface $V$ would not have had weak maximal contact with $J$, contradiction. So assertion (3) is shown. But as $F$ is not a $\q$-th power we must have $e\geq 1$, thus proving also (1).
 
 We can now easily prove assertion (4). Notice that
\hh  \[\u=\deg G\leq \frac{\q}{c!}\dds.\]
 Using inequality (\ref{eqwtQT}), this implies that \hh
 \[\ord_{Q_T}^{\mod p^e}F\geq \ord_{Q_T}(F+\G^\q)>\u.\]
 This proves (4).
 
To prove assertion (5), rewrite (4) as
 \[\ord_{(x_2,\ldots,x_n)}F((x_1,\ldots,x_n)+\tt x_1)+\G((x_1,\ldots,x_n)+\tt x_1)^\q>\u.\]
 Setting $x_1=1$, this is equivalent to
 \[\ord F((1,x_2,\ldots,x_n)+\tt)+\G((1,x_2,\ldots,x_n)+\tt)^\q>\u.\]
 Hence,
 \[F((1,x_2,\ldots,x_n)+\tt)+\G((1,x_2,\ldots,x_n)+\tt)^\q\in (x_2,\ldots,x_n)^{\u+1}.\]
 Set $N=\G((1,x_2,\ldots,x_n)+\tt)$. Since $F=x^r\cdot G$, we get that
 \[G((1,x_2,\ldots,x_n)+\tt)-\prod_{i\in\To}(x_i+t_i)^{-r_i}\cdot N^\q(x_2,\ldots,x_n)\in (x_2,\ldots,x_n)^{\u+1}.\]
 But since $\deg G((1,x_2,\ldots,x_n)+\tt)\leq \u$, this implies that
 \[G((1,x_2,\ldots,x_n)+\tt)=\lfloor\prod_{i\in\To}(x_i+t_i)^{-r_i}\cdot N^\q(x_2,\ldots,x_n)\rfloor_{\u}\]
as claimed.

 We proceed with assertion (6). \st First we verify that the two inequalities in the statement are equivalent. By definition of $v$, we have that $\deg F = \sum_{i\in T} r_i + v$. Further, we know that $\deg F$ is a multiple of $p^e$. Hence, it also a multiple of $p^{\ell+1}$. By definition of the number $b$, the residues $\ol{r_i}$ and $\ol{v}$ satisfy the inequalities 

 \[\sum_{i\in T} \ol{r_i} < b\cdot p^{\ell+1}\]
 and
 \[\sum_{i\in T} \ol{r_i} + \ol{v} \leq b\cdot p^{\ell+1}.\]

Since $0 \leq \ol{v} < p^{\ell+1}$, the following two are equivalent:

 \[\sum_{i\in T} \ol{r_i} > (b-1)\cdot p^{\ell+1}\]
 and
 \[\sum_{i\in T} \ol{r_i} + \ol{v} = b\cdot p^{\ell+1}.\]
 
 This proves that the two inequalities in assertion (6) are indeed equivalent. Now assume that they are violated and hence, the equality
 \[\sum_{i\in\A}\ol{r_i}+\ol{\u}=b\cdot p^{\ell+1}\]
 holds. Computation gives
 \[\prod_{i\in\To}(x_i+t_i)^{-r_i}=\prod_{i\in\To}(x_i+t_i)^{\ol{-r_i}}\cdot L^{p^{\ell+1}}(x_2,\ldots,x_n)\]
 for some element $L\in\kk[[x_2,\ldots,x_n]]$. Notice that
 \[\deg \prod_{i\in\To}(x_i+t_i)^{\ol{-r_i}}\leq \sum_{i\in T}\ol{-r_i}\]
 \[=b\cdot p^{\ell+1}-\sum_{i\in T}\ol{r_i}=\ol{\u}.\]
By (5) this implies that
 \[G((1,x_2,\ldots,x_n)+\tt)=\prod_{i\in\To}(x_i+t_i)^{\ol{-r_i}}\cdot \wt N^{p^{\ell+1}}(x_2,\ldots,x_n)\]
 for some polynomial $\wt N$. Consequently,
 \[\prod_{i\in\To}(x_i+t_i)^{r_i}\cdot G((1,x_2,\ldots,x_n)+\tt)\]
 is a $p^{\ell+1}$-th power. Since the degree of $F$ is divisible by $p^{\ell+1}$, also $F=x^r\cdot G$ is a $p^{\ell+1}$-th power, which contradicts the minimality of $p^{\ell}$ and proves (6).
 
It remains to prove assertions (7), (8) and (9). Let $k$ be an integer in the range $0\leq k<e$. Let $i\in\{1,\ldots,n\}$ be an index and assume that
 \[\partial_{x_i^{p^k}}(F)\neq0.\]
 Notice that $\partial_{x_i^{p^k}}(\G^\q)=0$. Consequently,
 \[\ord_{Q_T}\wF_{c-\q}=\ord_{Q_T}(F+\G^\q)\leq\ord_{Q_T}\partial_{x_i^{p^k}}(F)+p^k.\]
 If $i\in \A$, then
 \[\partial_{x_i^{p^k}}(F)=x_i^{r_i-p^k}\prod_{\substack{j\in\A\\j\neq i}}x_j^{r_j}H_{i,k}\]
holds for a homogeneous polynomial \hh $H_{i,k}\in \kk[\x]$ with $\deg H_{i,k}=\deg G=\u$. 
  On the other hand, if $i\notin\A$, then 
 \[\partial_{x_i^{p^k}}(F)=\prod_{j\in\A}x_j^{r_j}H_{i,k}\]
 for a homogeneous polynomial $H_{i,k}\in \kk[\x]$ with \hh $\deg H_{i,k}=\u-p^k$.
Together,
 \[\ord_{Q_T}\wF_{c-\q}\leq \ord_{Q_T}H_{i,k}+p^k\]
\hh  \[\leq\deg H_{i,k}+p^k=\u+\eps_{i,k}\]
 where 
 \[\eps_{i,k}=\begin{cases}
         p^k & \text{if $i\in\A$,}\\
         0 & \text{if $i\notin\A$.}
        \end{cases}
\]
 This proves, \hh together with the first inequality in (\ref{eqwtFi}), the following \hh inequalities: 
 \[\ddp\leq \frac{c!}{\q}\ord_{Q_T}\wF_{c-\q}-\sum_{i\not\in \TT} s_i\]
\hh  \[\leq\frac{c!}{\q}(\u+\eps_{i,k})-\sum_{i\not\in \TT} s_i\]
 \[\leq\ord I+\frac{c!}{\q}\eps_{i,k}.\]
 If $i\notin\A$, this implies that $\ddp\leq\dd$. Therefore,
 \[\partial_{x_i^{p^k}}(F)=0\]
holds for all indices $i\notin\A$ and all $k\geq0$ with $p^k<\q$. Thus, the variables $x_i$ with $i\notin\A$ only appear as $\q$-th powers in $F$. This proves (7).
 
\hh Recall that $\ell$ was chosen maximal such that $F$ is a $p^\ell$-th power. It is clear that this implies the existence of an index $i\in\A$ such that $\partial_{x_i^{p^\ell}}(F)\neq0$. The inequality above shows
 \[\ord I_1'\leq\ord I+\frac{c!}{\q}p^{\ell}\]
 \[\leq \ord I+\frac{c!}{p},\]
which proves (9).
 
 To prove assertion (8), fix an index $i\in \A$. Set $H_i=H_{i,\ell}$. \st Notice that the equality
 \[x_i^{p^\ell}\cdot\partial_{x_i^{p^\ell}}(F)=x^r\cdot H_i \]
 holds by definition of $H_i$. Further, we know that $H_i$ is a $p^\ell$-th power since $F$ is a $p^\ell$-th power by assumption. Hence, the assertion that $H_i$ is a polynomial in $(x_j-t_jx_1)^{p^\ell}$ and  $x_k^{p^e}$, for $j\in \TT\setminus\{1\}$ and $k\not\in\TT$, is equivalent to the equality
 \[\ord_{Q_T}H_i=\deg H_i.\]

 So assume that $\ord_{Q_T}H_i<\deg H_i$ holds. Since $H_i$ is a $p^\ell$-th power, this implies that
 \[\ord_{Q_T}H_i\leq \deg H_i-p^\ell.\]
 Plugging this into the chain of inequalities which we used to prove (9), we get that
 \[\ord_{Q_T}\wF_{c-\q}\leq \ord_{Q_T}H_{i}+p^\ell\]
 \[\leq\deg H_i=\u\]
 and consequently,
 \[\ddp\leq \frac{c!}{\q}\ord_{Q_T}\wF_{c-\q}-\sum_{i\not\in \TT} s_i\]
 \[\leq\frac{c!}{\q}\u-\sum_{i\not\in \TT} s_i\leq\ord I.\]
 But this contradicts our initial assumption that $\ddp >\ord I$. This gives (8) and ends the proof of the theorem.
 
\end{proof}

\bibliography{bibliography}
\bibliographystyle{amsalpha}

\vfill\eject 

\end{document}